\def\thetitle{Two moons in a puddle}
\def\theauthors{Berk Ceylan}
\newcommand{\Addresses}{{\bigskip\footnotesize

\medskip

\noindent   Berk Ceylan, 
\par\nopagebreak
 \textsc{}
  \par\nopagebreak
  \textit{Email}: \texttt{ceylan.berk@outlook.com}
  
}}
\def\parbf#1{\medskip\noindent{\bf #1}}
\theoremstyle{thm}
\theoremstyle{theorem}
\newtheorem{theorem}{Theorem}
\newtheorem{lemma}{Lemma}
\newcounter{thm}[section]
\def\claim#1{\par\medskip\noindent\refstepcounter{thm}\hbox{\bf\boldmath #1.}
\it\ 
}
\def\endclaim{
\par\medskip}
\begin{document}

\title{\thetitle}
\author{\theauthors}

\date{}
\maketitle

\parbf{Introduction:} 
The moon in a puddle theorem is one of the simplest local to global theorem in differential geometry. It states that a simple closed plane curve $\gamma$ with curvature at most 1 contains a unit disk inside. The theorem was originally proved by German Pestov and Vladimir Ionin \cite{pestovio}; see also \cite{Petrunin_2022}. In this note we discuss an analogous question for two open discs; in particular the following open question:
\medskip

    \textbf{Open Question.} \emph{Is it true that any simple closed plane curve with curvature at most 1 and length at least $4\pi$ surrounds two disjoint open unit disks?}
    \medskip

    The open question is interesting since there are many equality cases; where the length is exactly $4\pi$, but any two surrounded discs touch each other. We first prove our main result then give examples of equality cases. Also note that by pulling the bordered curve on the figure 1 to right and left one can see there is no three-moon analog of the open question.

\begin{center}
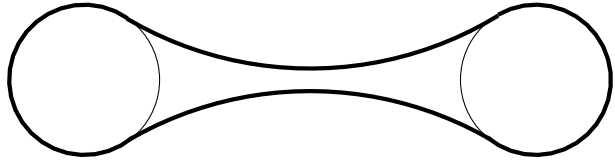

    \begin{tikzpicture}
        \coordinate (A) at (-3,0);
        \coordinate (B) at (3,0);
        \coordinate (C) at (0,4.8); 
        \coordinate (D) at (0,-4.8);

        \draw (A) circle (1);
        \draw (B) circle (1);

        \draw [black,ultra thick,domain=239.5:301.4] plot ({(4.8)*cos(\x)}, {(4.95)+(4.8)*sin(\x)});
        \draw [black,ultra thick,domain=60:120] plot ({(4.8)*cos(\x)}, {-(4.95)+(4.8)*sin(\x)});

        \draw [black,ultra thick,domain=55:310] plot ({-3+cos(\x)}, {sin(\x)});
        \draw [black,ultra thick,domain=120:-130] plot ({3+cos(\x)}, {sin(\x)});
        
    \end{tikzpicture}
\end{center}
\captionof{figure}{\emph{A curve that can only contain two disjoint unit circles.}}

\medskip

    \medskip

    \medskip
    \parbf{Two moons in a puddle:} The \textbf{incircle} at a point of a closed curve is the maximal circle which lies inside the curve and touches the chosen point. We denote the incircle at a point $p$ by $C_p$. For the proof of the main result we need the following adaptation of the key lemma in \cite{Petrunin_2022}.

    \begin{lemma}
        Let $\gamma$ be a simple closed plane curve and $p$ be a point on it. Assume that $\gamma_1$ is an arc of $\gamma$ with only the end points touching $C_p$. Then there is a point $q$ of $\gamma_1$ such that the osculating circle at $q$ supports $\gamma$ from inside. Moreover, any supporting osculating circles for two such arcs can only intersect inside $C_p$.
    \end{lemma}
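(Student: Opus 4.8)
The plan is to adapt the ``inflate a disk and pass to its osculating limit'' strategy of the key lemma in \cite{Petrunin_2022}. First I would assume $\gamma$ smooth enough for osculating circles to be defined (reducing to this case if necessary) and write $\bar\Omega$ for the closed region bounded by $\gamma$. Since $C_p$ is the incircle at $p$, its closed disk lies in $\bar\Omega$, it is tangent to $\gamma$ at $p$, and (by maximality) it is tangent to $\gamma$ at the two endpoints $a,b$ of $\gamma_1$ as well; and because the open disk of $C_p$ lies in $\Omega$ while $\gamma_1\subseteq\partial\Omega$, the open arc $\gamma_1$ lies strictly outside the closed disk of $C_p$. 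Hence $\gamma_1$ together with one of the two arcs $\sigma$ of $C_p$ from $a$ to $b$ bounds a crescent-shaped region $R\subseteq\bar\Omega$, whose two corners at $a$ and $b$ are cusps (the bounding curves are mutually tangent there). The one elementary fact I will use repeatedly: any disk $D\subseteq\bar\Omega$ tangent to $\gamma$ at a point $q$ has radius at most the radius of curvature $\rho(q)$ of $\gamma$ at $q$, so the osculating circle at $q$ supports $\gamma$ from inside precisely when it coincides with the largest disk of $\bar\Omega$ tangent to $\gamma$ at $q$ --- equivalently, when that largest disk has radius $\rho(q)$.

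For the existence of $q$, I would take a disk $D_0$ of maximal radius contained in $\bar R$ and examine where its boundary touches $\partial R=\gamma_1\cup\sigma$. If $\partial D_0$ avoided the interior of $\gamma_1$ it could be pushed toward $\gamma_1$ and enlarged, so $\partial D_0$ touches $\gamma_1$. If it touches $\gamma_1$ at two or more points, I take two of them consecutively; they are the endpoints of a sub-arc of $\gamma_1$ lying outside $D_0$, which together with an arc of $\partial D_0$ bounds a strictly smaller crescent, and I recurse on it with $\partial D_0$ in the role of $C_p$. If instead $\partial D_0$ touches $\gamma_1$ at exactly one point $q$, then --- since $\partial D_0$ can meet the circular arc $\sigma$ in at most one point $y$ --- the only contacts are at $q$ and $y$; a short argument (translate $D_0$ tangentially to $\partial C_p$ at $y$, which to second order moves it off $\partial C_p$, choosing the direction so as not to push $D_0$ toward $\gamma$ at $q$) shows that if $\gamma$ were strictly flatter than $D_0$ at $q$ the disk could be enlarged, contradicting maximality; hence $\rho(q)=\mathrm{rad}(D_0)$, so $D_0=O_q$ supports $\gamma$ from inside. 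Thus the recursion either stops at such a $q\in\gamma_1$, or runs forever, in which case the nested crescents contract (to a single point $q_\infty$, as one must check), the maximal disks converge to a disk $D_\infty\subseteq\bar\Omega$ which by construction has second-order contact with $\gamma$ at $q_\infty$ and lies weakly inside $\gamma$ there --- i.e. $D_\infty=O_{q_\infty}$ supports $\gamma$ from inside.

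For the ``moreover'' part, let $O_1,O_2$ be supporting osculating circles obtained for two arcs $\gamma_1,\gamma_2$ of the stated type, touching $\gamma$ at $q_1\in\gamma_1$ and $q_2\in\gamma_2$; both lie in $\bar\Omega$. Now $\bar\Omega$ is the union of $\bar C_p$ with the crescents that $C_p$ cuts off along the arcs of $\gamma$ between consecutive tangency points of $C_p$ with $\gamma$; distinct such crescents are disjoint, and each meets $\bar C_p$ only at a cusp. A disk contained in $\bar\Omega$ and touching $\gamma$ at an interior point of one crescent's arc cannot reach a neighboring crescent, since the cusps are too thin for a disk to slip through (this uses that $C_p$ is tangent to $\gamma$ at every contact point); so outside $\bar C_p$ the disks $O_1$ and $O_2$ lie in disjoint crescents and are therefore disjoint there. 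Hence the boundary circles of $O_1$ and $O_2$ can cross each other only inside $C_p$, as claimed.

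The step I expect to be the main obstacle is making the recursion/limit in the existence argument rigorous: one must show the nested crescents really contract to a single point rather than a positive-length sub-arc, handle the possibility of a maximal disk that is ``pinned'' in a cusp without osculating $\gamma$, and verify that the limiting circle has genuine second-order contact with $\gamma$ and still lies in $\bar\Omega$. The analogous delicate point in the ``moreover'' part is the quantitative cusp estimate at the tangency points of $C_p$ --- precisely where the hypothesis that $C_p$ is the incircle at $p$ (hence tangent to $\gamma$ wherever it touches) is essential.
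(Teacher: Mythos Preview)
Your overall architecture is sound and close in spirit to the paper's (and to Petrunin's key lemma), but there is one real structural difference, and it lands exactly on the gap you flagged as your ``main obstacle''. The paper does not take the maximal disk in the crescent and recurse on sub-crescents; instead it argues by contradiction and uses incircles at the \emph{midpoints} of the current arc. Concretely: assuming no point of $\gamma_1$ has a supporting osculating circle, let $q_1$ be the midpoint of $\gamma_1$; the incircle $C_{q_1}$ must then touch $\gamma$ at some other point $r_1\in\gamma_1$, and the sub-arc $\gamma_2$ from $q_1$ to $r_1$ has length at most $\tfrac12\,\mathrm{length}\,\gamma_1$. One then takes $q_2$ the midpoint of $\gamma_2$, observes that $C_{q_2}$ cannot touch the relevant arc $\sigma_1$ of $C_{q_1}$ and so must touch another point of $\gamma_2$, giving $\gamma_3$ with $\mathrm{length}\,\gamma_3<\tfrac12\,\mathrm{length}\,\gamma_2$, and so on. The midpoint choice forces geometric decay of the arc lengths, so the nested arcs contract to a single point $q_\infty$, and the contradiction is immediate: $C_{q_\infty}$ would have to touch a second point $r_\infty$ lying in every $\gamma_n$, whence $r_\infty=q_\infty$.

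Your scheme --- maximal disk in $\bar R$, then pass to the sub-crescent between two consecutive contact points --- carries no a priori halving, so the contraction of the nested crescents genuinely needs a separate argument, and the edge cases you list (a disk pinned near a cusp; the single-contact ``translate tangentially at $y$'' step when $q$ and $y$ are awkwardly placed) each require real work. The midpoint/incircle device buys all of this for free and replaces your limiting-disk compactness argument by a one-line contradiction. Your treatment of the ``moreover'' clause --- the crescents outside $C_p$ are disjoint and meet $\bar C_p$ only at tangential cusps, so two supporting disks cannot overlap outside $C_p$ --- is essentially the intended picture; the paper simply leaves that part to the reader.
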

    \begin{proof}
        Assume that at the points of $\gamma_1$, osculating circles does not support $\gamma$ from inside. Suppose that $q_1$ is the midpoint of $\gamma_1$. Then the incircle $C_{q_1}$ has to touch at least one other point of $\gamma$, otherwise we could enlarge it since its curvature is greater than $\gamma$ at $q_1$. Pick the first one clock-wise, say $r_1$. Observe that $r_1$ lies on $\gamma_1$. Then we get a sub-arc of $\gamma_1$ between $q_1$ and $r_1$, say $\gamma_2$. Denote the two arcs of $C_{q_1}$ between $q_1$ and $r_1$ by $\sigma_1$ and $\sigma_2$.
        We can assume that concatenation of $\gamma_2$ and $\sigma_1$ contains $\sigma_2$ inside. 
        
             Denote by $q_2$ the midpoint of $\gamma_2$. Observe that $C_{q_2}$ cannot touch $\sigma_1$. Then $C_{q_2}$ touches another point of $\gamma_2$. So we get a sub-arc $\gamma_3$ of $\gamma_2$ with 
            \begin{equation}
                length \gamma_3 < \dfrac{1}{2} length \gamma_2.
            \end{equation}

            Continuing this way, we get a sequence of nested arcs of $\gamma_1$ with length going to 0. Hence their intersection consists of a point $q_{\infty}$. Then the incircle  $C_{q_{\infty}}$ has to touch another point $r_{\infty}$. By the argument above $r_{\infty}$ lies on $\gamma_n$ for any $n$. Thus $r_{\infty}=q_{\infty}$, a contradiction. Thus there is a point of $\gamma_1$ with osculating circle supporting $\gamma$ from inside.

            We leave the proof of the last statement to reader.
    \end{proof}
    \begin{center}
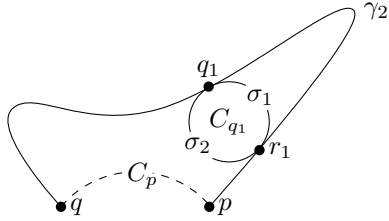

    \begin{tikzpicture}[scale=1.3]
    \draw[smooth,tension=0.8] plot coordinates{(1,1) (0.5,2) (2,2) (4,3) (2.5169,1)};

    \node[right] at (2.5169,1) {$p$};
    \fill[black]  (2.5169,1) circle(1.5pt);
    \node[right] at (1,1) {$q$};
    \fill[black]  (1,1) circle(1.5pt);

    \draw (2.72, 1.86928) circle (0.41);
    \node at (2.72, 1.86928) {$C_{q_1}$};

    \fill[black]  (2.51, 2.23) circle(1.5pt);
    \fill[black]  (3.03,1.58) circle(1.5pt);
    \node[above] at (2.51, 2.23) {$q_1$};
    \node[right] at (3.03,1.58) {$r_1$};


    \fill[white]  (3.04, 2.15) circle(5pt);
    \node at (3.04, 2.12) {$\sigma_1$};

    \fill[white]  (2.4,1.6) circle(5.2pt);
    \node at (2.4,1.63) {$\sigma_2$};
    \coordinate (A) at (1,1);
    \draw[dashed] (A) arc(140:40:1);

    \node[right] at (4,3) {$\gamma_2$};

    \fill[white]  (1.8,1.32) circle(4pt);
    \node at (1.836,1.32) {$C_p$};
    
  \end{tikzpicture}
\end{center}
\captionof{figure}{\emph{Sketch of the lemma.}}
       
\medskip
The reader can take hint from the figure 3 for the proof of the theorem.
        \begin{theorem}
    Let $\gamma$ be a simple closed plane curve with curvature at most 1 and diameter at least 4, then there are two disjoint open unit disks inside $\gamma$.
\end{theorem}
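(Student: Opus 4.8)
The plan is to place one unit disk near each end of a diameter of $\gamma$ and then to use the diameter bound to force the two disks apart. Let $\Omega$ be the region bounded by $\gamma$, and choose $a,b\in\gamma$ realizing the diameter, so $|ab|\ge 4$. Put $v=(b-a)/|ab|$ and $\phi(x)=\langle x-a,v\rangle$. Since $a,b$ realize the diameter, $\phi$ attains its minimum $0$ on $\overline\Omega$ only at $a$ and its maximum $|ab|$ only at $b$, and the lines $\{\phi=0\}$ and $\{\phi=|ab|\}$ support $\gamma$ at $a$ and at $b$. I aim to produce a unit disk $D_a\subseteq\overline\Omega$ tangent to $\gamma$ at $a$ and a unit disk $D_b\subseteq\overline\Omega$ tangent to $\gamma$ at $b$. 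As $D_a$ is tangent at $a$ to the supporting line $\{\phi=0\}$, it lies in the slab $\{0\le\phi\le 2\}$, and likewise $D_b\subseteq\{|ab|-2\le\phi\le|ab|\}$. Because $|ab|\ge 4$, these slabs meet only along $\{\phi=2\}$, so the open disks $D_a^{\circ}$ and $D_b^{\circ}$ lie in $\{\phi<2\}$ and $\{\phi>2\}$ respectively and are therefore disjoint; this gives the theorem.

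Everything thus reduces to the claim that \emph{the incircle $C_a$ at an endpoint $a$ of a diameter has radius at least $1$} (and symmetrically at $b$); then $D_a$ is a unit disk inscribed in $C_a$ and tangent to $\gamma$ at $a$. To prove the claim, suppose $C_a$ has radius $\rho<1$. Since the osculating circle of $\gamma$ at $a$ has radius $1/\kappa(a)\ge 1>\rho$, the curve $\gamma$ lies outside $C_a$ near $a$, so $C_a$ must touch $\gamma$ at a further point; let $\gamma_1$ be an arc of $\gamma$ between two consecutive touching points of $C_a$. By the Lemma there is a point $q\in\gamma_1$ whose osculating circle $O$ supports $\gamma$ from inside, and since the curvature is at most $1$ the circle $O$ has radius at least $1$. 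From $O$ I then want to extract a unit disk that still lies in $\{\phi\le 2\}$, using that the endpoints of $\gamma_1$ sit on $C_a\subseteq\{0\le\phi\le 2\rho\}$ and that $a$ is extremal, so $\overline\Omega$, and hence $O$, lies in $\{\phi\ge 0\}$. When a single osculating circle overshoots the slab one appeals to the ``moreover'' part of the Lemma: take $\gamma_1$ to be one of the two arcs cut off by the two touching points of $C_a$, and use that $O$ and the analogous circle for the other arc can meet only inside $C_a$. Carrying this out at both $a$ and $b$ yields $D_a$ and $D_b$.

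The main obstacle is exactly this last step: ensuring the disk obtained near $a$ does not protrude from $\{\phi\le 2\}$. The crude estimate (the touching point of $O$ has $\phi<2$, and one adds $1$ for the radius) only gives $\phi<3$, which is useless in the sharp case $|ab|=4$ — realized, for instance, by the length‑$2$, radius‑$1$ stadium, or by the ellipse with semi‑axes $2$ and $\sqrt2$. What rescues the argument is that near the extreme point $a$ the curve $\gamma$ is \emph{flatter} than the unit circle tangent at $a$ to the line $\{\phi=0\}$, so that unit disk is trapped inside $\overline\Omega$; making this ``trapping'' rigorous — a Sturm‑type comparison of $\gamma$ against that unit circle, controlled by the Lemma's second assertion so that no arc bulges back across $C_a$ — is the technical core. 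Once the claim is in hand, disjointness of $D_a^{\circ}$ and $D_b^{\circ}$ is immediate from $|ab|\ge 4$.
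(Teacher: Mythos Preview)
Your reduction to the claim ``the incircle $C_a$ at a diameter endpoint has radius at least $1$'' is where the argument breaks, because that claim is false. Nothing prevents a distant arc of $\gamma$ from curling back and entering the unit disk tangent to $\gamma$ at $a$: the curvature bound is purely local, and extremality of $a$ for $\phi$ only forces $\gamma$ to stay on one side of the tangent \emph{line} at $a$, not outside the tangent \emph{disk}. The paper's own Figure~3 is drawn precisely to depict this situation, and its proof treats ``$C_{p_1}$ has radius less than $1$'' as a genuine case to be handled, not one to be ruled out. Your final paragraph does not escape this: ``near $a$ the curve is flatter than the unit circle, so that unit disk is trapped inside $\overline\Omega$'' passes from a local comparison to the global statement you set out to prove, and any Sturm-type comparison only controls $\gamma$ \emph{near} $a$, whereas the obstruction to $C_a$ being large is a far-away touching point $q$. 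The Lemma's ``moreover'' clause does not say that arcs cannot bulge back across $C_a$; it only says that two supporting osculating circles coming from different arcs meet inside $C_a$.

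The weaker fallback you sketch --- find merely \emph{some} unit disk inside $\overline\Omega\cap\{\phi\le 2\}$ --- is also unproved, and the osculating circle $O$ supplied by the Lemma need not lie in that slab: when $C_a$ is small and touches a nearby point $q$, the arc $\gamma_1$ carrying $O$ may be almost all of $\gamma$, so $O$ can sit anywhere, even near $b$. The paper does \emph{not} attempt a slab decomposition. Instead it labels the two arcs so that $p_2\in\gamma_2$, applies the Lemma to $\gamma_1$ to obtain a supporting osculating circle $O_1$, and then uses the separation behind the ``moreover'' clause directly between the two ends: $O_1$ and $C_{p_2}$ can meet only inside $C_{p_1}$. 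If they do meet, $C_{p_2}$ reaches into $C_{p_1}\subset\{\phi<2\}$, and since $|p_1p_2|\ge 4$ this forces $C_{p_2}$ to have radius bigger than $1$; shrinking it to a unit disk tangent at $p_2$ lands it in $\{\phi\ge |p_1p_2|-2\}$, hence outside $C_{p_1}$, hence disjoint from $O_1$. If they do not meet, one is already done or repeats the construction at $p_2$. The hypothesis $\mathrm{diam}\,\gamma\ge 4$ enters only through this ``$C_{p_2}$ meets $C_{p_1}\Rightarrow C_{p_2}$ is large'' step, not through confining each disk to a half-slab.
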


\begin{proof}
    Let $p_1,p_2$ be points on $\gamma$ with maximal distance. If both of the incircles $C_{p_1}$ and $C_{p_2}$ have radius at least 1 then by possibly shrinking them we get the result. So assume $C_{p_1}$ has radius less than 1. Then $C_{p_1}$ has to touch another point of $\gamma$, say $q$. Denote the two arcs of $\gamma$ between $p_1$ and $q$ by $\gamma_1$, $\gamma_2$ and, two arcs of $C_{p_1}$ between $p$ and $q$ by $\sigma_1$, $\sigma_2$. Assume concatenation of $\gamma_i$ with $\sigma_i$ contains $C_{p_i}$ inside. Then by curvature constraint $\gamma_1$ has points outside $C_{p_1}$. Hence by lemma, one of the osculating circles at points of $\gamma_1$ outside $C_{p_1}$, supports $\gamma$ from inside. Note that osculating circles have radius at least 1.  Note that supporting osculating circle can only intersect with $C_{p_2}$ inside $C_{p_1}$, and if they do intersect then $C_{p_2}$ has radius more than 1, thus by shrinking it we get the result. If they do not intersect we apply the same argument to $C_{p_2}$ to get two disjoint open unit disks inside $\gamma$. 
\end{proof} 

 \begin{center}
     \begin{tikzpicture}[scale=0.8]
         \draw [black, rounded corners, domain=180:-112.6] plot ({3*cos(\x)}, {3*sin(\x)});
         \draw [black, domain=180:360] plot ({(-2)+cos(\x)}, {sin(\x)});
         \draw [black,domain=66.5:246.5] plot ({(-0.75)+cos(\x)}, {(-1.85)+sin(\x)});
         \draw [black,rounded corners,domain=-127.5:194] plot ({(0.655)+(1.7)*cos(\x)}, {(0.418)+(1.7)*sin(\x)});

          \fill[black]  (1.7,-2.45) circle(1.5pt);
          \fill[black]  (-1.7,2.45) circle(1.5pt);
          \node[below] at  (1.7,-2.5) {$p_1$};
          \node[above] at (-1.7,2.5) {$p_2$};
           \fill[black]  (1.1,-1.22) circle(1.5pt);
           \node[above] at (1.1,-1.22) {$q$};

          \draw (1.29,-1.88) circle (0.7);
          \draw (-1.26,1.89) circle (0.71);
          \node at (1.29,-1.88) {$C_{p_1}$};
          \node at (-1.26,1.89) {$C_{p_2}$};

          \fill[white]  (-1.7,-1.5) circle(5pt);
          \node at (-1.7,-1.5) {$\gamma_1$};
          
            \fill[white]  (1.92,-1.60) circle(5pt);
            \node at (1.94,-1.62) {$\sigma_1$};

            \fill[white]  (0.64,-2.2) circle(5pt);
            \node at (0.64,-2.17) {$\sigma_2$};

     \end{tikzpicture}
 \end{center}
 \captionof{figure}{\emph{The idea is to apply lemma to $\gamma_1$ and its analogous arc for $C_{p_2}$.}}
 \medskip
 
    Note that if two disjoint open unit disks lie inside a curve as above, then its diameter is at least 4. So, this is actually an if and only if condition. From the theorem, our question has the following reformulation:
    
\medskip
    \textbf{Open Question.} \emph{If length of a simple closed plane curve with curvature at most 1 is at least $4\pi$
then diameter is at least $4$.}
\medskip

    Also note that if diameter of $\gamma$ is at least 4, then according to theorem shortest possible curve is the one given on the figure 4.
    \begin{center}
        \begin{tikzpicture}
            \draw (0,0) circle (1);
            \draw (2,0) circle (1);

            \draw [black,ultra thick,domain=90:270] plot ({cos(\x)}, {sin(\x)});
            \draw [black,ultra thick,domain=90:-90] plot ({2+cos(\x)}, {sin(\x)});

            \draw [black,ultra thick] (0,1) -- (2,1);
            \draw [black,ultra thick] (0,-1) -- (2,-1);
        \end{tikzpicture}
        \captionof{figure}{\emph{Diameter is 4 and length is $2\pi+4$.}}
    \end{center}

   \medskip

    \parbf{Curves with length $4\pi$ and diameter $4$:}
Obvious example is the circle of radius 2. So, let us give some of the interesting examples. The figure 5 is one such.
\begin{center}
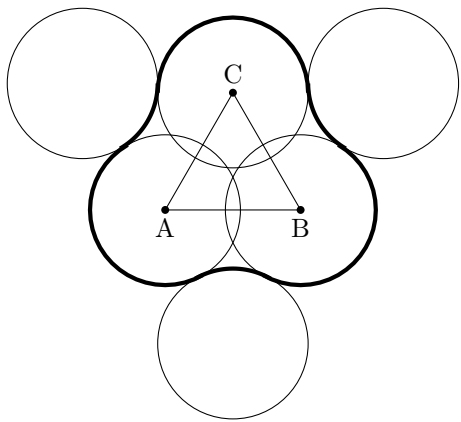


\begin{tikzpicture}
  \coordinate (A) at (0,0);
  \coordinate (B) at (1.8,0);
  \coordinate (C) at (0.9,1.56); 
  \coordinate (AB') at (0.9,-1.78);
  \coordinate (AC') at (-1.1,1.683);
  \coordinate (BC') at (2.9,1.683);
  \node[below] at (A) {A};
  \node[below] at (B) {B};
  \node[above] at (C) {C};
\fill[black]  (A) circle(1.5pt);
\fill[black]  (B) circle(1.5pt);
\fill[black]  (C) circle(1.5pt);
  \draw (A) -- (B) -- (C) -- cycle;

  \draw (A) circle (1);
  \draw (B) circle (1);
  \draw (C) circle (1);
  \draw (AB') circle (1);
  \draw (AC') circle (1);
  \draw (BC') circle (1);

   \draw [black,ultra thick,domain=0:-60] plot ({-1.1+cos(\x)}, {1.683+sin(\x)});
\draw [black,ultra thick,domain=60:120] plot ({0.9+cos(\x)}, {-1.78+sin(\x)});
   \draw [black,ultra thick,domain=180:240] plot ({2.9+cos(\x)}, {1.683+sin(\x)});
   \draw [black,ultra thick,domain=120:300] plot ({cos(\x)}, {sin(\x)});
\draw [black,ultra thick,domain=60:-120] plot ({1.8+cos(\x)}, {sin(\x)});
   \draw [black,ultra thick,domain=0:180] plot ({0.9+cos(\x)}, {1.56+sin(\x)});

 \end{tikzpicture}
\end{center}
\captionof{figure}{\emph{Three almost tangent unit circles.}}
\medskip
The circles in the picture are all unit and the triangle is equilateral. Moreover the circles inside the border have centers on the corners of the equilateral triangle. Our curve is the bordered one, call it $\gamma$. Let us call the length of a side of the triangle $2-l$. It is clear that if $l>0$ then no 2 disjoint open unit disks fit inside $\gamma$. As $l$ goes to $0$, length of $\gamma$ goes to $4\pi$, we leave it to the reader to calculate that, and at $l=0$ there are 3 disjoint open unit disks inside $\gamma$, and all of them touch each other.

    \hspace{0.5cm} Any curve of constant width 4 and curvature at most 1 are also examples to such curves. Also one
 may start with a curve of constant width 4 and curvature at most 1;
if it contains an arc of curvature 1/3 then it can be exchanged to a concatenation of 3 arcs of curvature 1 the same way as in the example above. Therefore we can produce huge variety of examples.

\medskip
\textbf{Acknowledgements} We wish to thank anonymous referee for his valuable comments and suggestions.

\bibliographystyle{plain}
\bibliography{refs}

\Addresses
\end{document}